
\documentclass[11pt,reqno,tbtags]{amsart}
\usepackage{amssymb}
\numberwithin{equation}{section}


\newtheorem*{property*}{Property \csname @currentlabel\endcsname}


\newtheorem{theorem}{Theorem}[section]

\newtheorem{corollary}[theorem]{Corollary}

\theoremstyle{definition}
\newtheorem{example}[theorem]{Example}

\newtheorem{remark}[theorem]{Remark}

\theoremstyle{remark}

\newenvironment{romenumerate}{\begin{enumerate}
 }{\end{enumerate}}

\newenvironment{abenumerate}{\begin{enumerate}
 }{\end{enumerate}}

\newcounter{oldenumi}
{\setcounter{oldenumi}{\value{enumi}}
\begin{romenumerate} \setcounter{enumi}{\value{oldenumi}}}
{\end{romenumerate}}


\newcounter{thmenumerate}

\newcounter{xenumerate}   

\newcommand\pfitemx[1]{\par\emph{#1}:}

\newcommand{\refT}[1]{Theorem~\ref{#1}}





\begingroup
  \count255=\time
  \divide\count255 by 60
  \count1=\count255
  \multiply\count255 by -60
  \advance\count255 by \time
  \ifnum \count255 < 10 \xdef\klockan{\the\count1.0\the\count255}
  \else\xdef\klockan{\the\count1.\the\count255}\fi
\endgroup



\newcommand\xx{\mathbf{x}} 
\newcommand\set[1]{\ensuremath{\{#1\}}}

\newcommand\xpar[1]{(#1)}
\newcommand\bigpar[1]{\bigl(#1\bigr)}
\newcommand\Bigpar[1]{\Bigl(#1\Bigr)}
\newcommand\biggpar[1]{\biggl(#1\biggr)}
\newcommand\lrpar[1]{\left(#1\right)}

\newcommand\xcpar[1]{\{#1\}}
\newcommand\bigcpar[1]{\bigl\{#1\bigr\}}

\newcommand\lrcpar[1]{\left\{#1\right\}}
\newcommand\bigabs[1]{\bigl|#1\bigr|}

\def\rompar(#1){\textup(#1\textup)}    

\newcommand\parfrac[2]{\Bigpar{\frac{#1}{#2}}}

\def\xexp(#1){e^{#1}}
\newcommand\ceil[1]{\lceil#1\rceil}
\newcommand\floor[1]{\lfloor#1\rfloor}

\newcommand\ie{i.e.\spacefactor=1000}

\newcommand\viz{viz.\spacefactor=1000}
\newcommand\cf{cf.\spacefactor=1000}


\newcounter{CC}
\newcommand{\CC}{\stepcounter{CC}\CCx} 
\newcommand{\CCx}{C_{\arabic{CC}}}     
\newcommand{\CCdef}[1]{\xdef#1{\CCx}}     
\newcounter{cc}
\newcommand{\cc}{\stepcounter{cc}\ccx} 
\newcommand{\ccx}{c_{\arabic{cc}}}     
\newcommand{\ccdef}[1]{\xdef#1{\ccx}}     

\newcommand\E{\operatorname{\mathbb E{}}}
\renewcommand\P{\operatorname{\mathbb P{}}}

\newcommand\Bin{\operatorname{Bin}}

\newcommand\vol{\operatorname{vol}}

\newcommand\ga{\alpha}

\newcommand\gd{\delta}
\newcommand\gD{\Delta}

\newcommand\cE{\mathcal E}
\newcommand\cH{\mathcal H}
\newcommand\cS{{\mathcal S}}

\def\[#1]{[\![#1]\!]}

\newcommand\qq{^{1/2}}

\newcommand\qw{^{-1}}
\newcommand\qww{^{-2}}

\renewcommand{\=}{:=}

\newcommand\rhs{right hand side}

\newcommand{\gnp}{G(n,p)}
\newcommand{\erg}{e_R(G)}
\newcommand{\ex}[1]{e(#1)}

\newcommand{\vx}[1]{v(#1)}
\newcommand{\gax}{\ga^*}
\newcommand{\MRG}{M_{R,G}}
\newcommand{\tit}{\tilde t}
\newcommand{\ced}{\cE_\gd}
\newcommand{\nd}{N_\gd}
\newcommand{\xc}{a}
\newcommand{\xco}{a_0}
\newcommand{\ct}{c_1}




\newcommand\REM[1]{{\raggedright\texttt{[#1]}\par\marginal{XXX}}}



\hyphenation{Upp-sala}

\newcommand\urladdrx[1]{{\urladdr{\def~{{\tiny$\sim$}}#1}}}

\begin{document}
\title[Upper tails for subhypergraphs and rooted random graphs]
{Upper tails for counting objects in randomly induced subhypergraphs and rooted random graphs}

\date{May 7, 2009} 

\author{Svante Janson}
\address{Department of Mathematics, Uppsala University, PO Box 480,
SE-751~06 Uppsala, Sweden}
\email{svante.janson@math.uu.se}
\urladdrx{http://www.math.uu.se/~svante/}

\author{Andrzej Ruci\'nski}
\address {Department of Discrete Mathematics, Adam Mickiewicz
  University, Pozna\'n, Poland}
\email{rucinski@amu.edu.pl} 
\thanks{Second author supported by Polish grant N201036 32/2546. 
Research was performed while 
the authors visited Institut Mittag-Leffler in  Djursholm,
Sweden, during the program 'Discrete Probability', 2009.}

\subjclass[2000]{60C05; 05C80, 05C65}

\begin{abstract}
General upper tail estimates are given for counting edges in a random
induced subhypergraph of a fixed hypergraph $\cH$, with an easy proof
by estimating the moments. As an application we consider the numbers of
arithmetic progressions and Schur triples in random subsets of
integers. In
the second part of the paper we return to the subgraph counts in
random graphs and provide upper tail estimates in the rooted case.
\end{abstract}

\maketitle


\section{Introduction}\label{S:intro}

Consider a finite sum of dependent random variables of the following form. Let $\Gamma$ be a finite
ground set and let $\cS$ be a family of its subsets. Let $\Gamma_p$ be a random, binomial subset of
$\Gamma$ which independently includes each element of $\Gamma$ with probability $p$. Finally, for
each $S\in\cS$, let $I_S$ be the indicator random variable of the event $\{S\subseteq\Gamma_p\}$.
Then $X=X(\Gamma,\cS,p)=\sum_{S\in\cS}I_S$ counts the number of members of the family $\cS$
contained in a random subset  $\Gamma_p$. A lot of research has been devoted to the study of the
asymptotic distribution of $X$ when the order $N=|\Gamma|$ grows to $\infty$ and $p=p(N)$, both in
a general setting and for particular instances, most notably for random graphs, see \cite{JLR}.

One feature which received a lot of attention is the rate of decay of
the tails of $X$, the lower tail $\P(X\le t\E X)$ for $0<t<1$, and the
upper tail $\P(X\ge t\E X)$ for $t>1$. Good estimates for the lower tail
follow from the FKG inequality (lower bound) and Janson's inequality
(upper bound), see \cite{JLR}, Section 2.2. Often, these two bounds asymptotically match under some
restrictions on the dependencies among the summands $I_S$. This is, in
particular, the case of subgraph counts in random graphs, see
\cite{JLR}, Section 3.1.

The upper tails tend to be harder to analyze. Some ad hoc results can
be found in \cite{JLR}, \cite{vu}, \cite{inf}, \cite{del}, among
others. For the subgraph count problem a quite satisfactory and
complete result has been obtained in \cite{JOR}, where the logarithms
of the upper and lower bound on $\P(X\ge t\E X)$ are of the same order of
magnitude except for a logarithmic term. A generalization to random
hypergraphs can be found in \cite{dujo}.

This paper can be viewed as a follow-up paper to \cite{JOR}. Using the
proof techniques developed therein, those results are extended in two
directions. First, we return to the more general model of set systems
(or hypergraphs) and obtain some straightforward estimates for the
upper tail of $X$, covering, in particular, the number of arithmetic
progressions of given length in a random subset of integers. Then, we
return to the subgraph counts to study the rooted version of the
problem, only to discover some unexpected features there.


\section{Counting edges of randomly induced subhypergraphs}

Let $\cH$ be a $k$-uniform hypergraph on a vertex set $\Gamma$ with
$|\Gamma|=N$ and with $|\cH|=\xc N^q$ edges, where $\xc=\xc(N)>0$ and $0<q\le
k$. Consider a random, binomial subset $\Gamma_p$ of $\Gamma$, where
$0<p=p(N)<1$, and the random variable $X=|\cH[\Gamma_p]|$ counting the
edges of $\cH$ that are entirely present in $\Gamma_p$. Note that
$$\mu:=\E X=|\cH|p^k=\xc N^qp^k.$$
For $j=0,1,\dots,k$, let
$$\Delta_j=\max_{S\in \binom{\Gamma}j}|\{T\in\cH: T\supseteq S\}|,$$
i.e., the maximum number of edges that contain $j$ given vertices.

\begin{theorem}\label{T1} Let $q$ be an integer, $1\le q\le k$, and let
$\xco>0$ and  $t>1$ be real numbers.
There exists a constant $c=c(q,\xco,t)$ such that if   $\cH$ satisfies the
following four conditions: 
\begin{romenumerate}
\item\label{t1xc}
 $\xc(N)=|\cH|/N^q\ge \xco$,
\item \label{t1j}
for all $j\le q$ we have $\Delta_j=O(N^{q-j})$,
\item \label{t1jq}
for all $j>q$ we have $\Delta_j=O(1)$, 
\item \label{t1Gamma0}
there exists $C>0$ and  $\Gamma_0\subseteq\Gamma$ such that
  $|\Gamma_0|\le C\mu^{1/q}$ and $|\cH[\Gamma_0]|\ge t\mu$,
\end{romenumerate}
then, with $X=|\cH[\Gamma_p]|$,
\begin{equation*}
p^{C\mu^{1/q}}=\exp\bigcpar{-C\mu^{1/q}\log(1/p)}
\le \P(X\ge t\mu)\le\exp\{-c\mu^{1/q}\}.
\end{equation*}
\end{theorem}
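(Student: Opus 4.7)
The lower bound is immediate from condition~\ref{t1Gamma0}: the event $\{\Gamma_0\subseteq\Gamma_p\}$ has probability $p^{|\Gamma_0|}\ge p^{C\mu^{1/q}}$, and on it $X=|\cH[\Gamma_p]|\ge|\cH[\Gamma_0]|\ge t\mu$ by monotonicity.

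For the upper bound the plan is to use the moment method. I would pick $m:=\lceil c_1\mu^{1/q}\rceil$ for a small constant $c_1=c_1(q,\xco,t)>0$ to be chosen at the end, and apply Markov's inequality in the form $\P(X\ge t\mu)\le(t\mu)^{-m}\E X^m$. Expanding
\[
\E X^m=\sum_{(T_1,\ldots,T_m)\in\cH^m}p^{|T_1\cup\cdots\cup T_m|}
\]
and pulling out one factor at a time, for each partial union $U=T_1\cup\cdots\cup T_{i-1}$ (which automatically satisfies $|U|\le km\le c_1k\mu^{1/q}$) the inner sum over $T_i$ equals
\[
A(U):=\sum_{T\in\cH}p^{|T\setminus U|}=\sum_{j=0}^k p^{k-j}\bigl|\{T\in\cH:|T\cap U|=j\}\bigr|.
\]
The aim is to show $A(U)\le(1+h(c_1))\mu$ uniformly, where $h(c_1)=O(c_1)$. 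Iterating then gives $\E X^m\le(1+h(c_1))^m\mu^m$, and choosing $c_1$ small enough that $1+h(c_1)<t$ yields $\P(X\ge t\mu)\le((1+h(c_1))/t)^m=\exp(-c\mu^{1/q})$.

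The per-step bound splits by $j$. The $j=0$ term is exactly $\mu$. For $1\le j\le q-1$ I would use~\ref{t1j}: combined with the observation that~\ref{t1j} at $j=q$ forces $\xc=O(1)$ (each edge contains $\binom{k}{q}$ $q$-subsets, each lying in $O(1)$ edges), which in turn gives $\mu^{1/q}=O(Np^{k/q})=O(Np)$ and hence $|U|/(Np)=O(c_1)$, the standard estimate $\binom{|U|}{j}\Delta_j p^{k-j}=O((|U|/(Np))^j\mu/\xc)$ yields $O(c_1^j)\mu$ per term, summing to $O(c_1)\mu$. The main obstacle is the range $j\ge q$: the naive pointwise bound $\binom{|U|}{j}\Delta_j=O(|U|^j)$ leads to $O(|U|^jp^{k-j})$ per term, which can exceed $\mu$ when $k>q$. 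The trick is to \emph{aggregate} over $j\ge q$ before counting: using $p^{k-j}\le 1$ and a union bound,
\[
\sum_{j\ge q}p^{k-j}\bigl|\{T:|T\cap U|=j\}\bigr|\le\bigl|\{T:|T\cap U|\ge q\}\bigr|\le\binom{|U|}{q}\Delta_q=O(|U|^q)=O(c_1^q)\mu,
\]
where the last equality uses $|U|\le c_1k\mu^{1/q}$ and $\Delta_q=O(1)$. This aggregation is the one nonobvious step; once it is in place, summing the three ranges gives $A(U)\le(1+O(c_1))\mu$, and the rest of the argument is routine.
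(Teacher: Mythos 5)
Your proposal is correct and follows essentially the same route as the paper: Markov's inequality applied to the $m$-th moment with $m=\lceil c_1\mu^{1/q}\rceil$, an inductive one-edge-at-a-time bound on $\E X^m$, and a split of the per-step sum by the overlap size $j$, where $j=0$ gives $\mu$, small $j$ is handled via $\Delta_j=O(N^{q-j})$ and $|U|/(Np)=O(c_1)$, and the range $j\ge q$ is controlled by a $q$-subset union bound with $\Delta_q=O(1)$. The only cosmetic difference is that you aggregate all of $j\ge q$ under one bound $\binom{|U|}{q}\Delta_q$, while the paper treats $j=q$ with the same $\Delta_j$ estimate as the smaller $j$'s and bounds each $j>q$ separately by $N_{\ge q}$; both yield the same $O(c_1^q)\mu$ contribution.
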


Before giving the proof, we
make some comments.

  \begin{itemize}
\item

The two exponents are of the same order
of magnitude  except for the
logarithmic term $\log(1/p)$; this inaccuracy disappears obviously for $p$ constant.

\item
Note that $\P(X\ge t\mu)>0\iff t\mu\le|\cH|\iff tp^k\le1$, so the
theorem is interesting for $t\le p^{-k}$ only. (For
larger $t$, $\P(X\ge t\mu)=0$ so the lower bound
fails, while the upper bound is trivial; further,
\ref{t1Gamma0} fails.)

\item
Condition \ref{t1jq} is redundant, since it follows from 
\ref{t1j} with $j=q$, but we prefer to include it
explicitly for emphasis, and for comparison with \refT{T2} which allows non-integer values of $q$ (note that for non-integer $q$, \ref{t1jq} does not follow from \ref{t1j}).

\item
As we will see in the proof, the upper bound follows only from conditions \ref{t1xc}--\ref{t1jq}, while the lower bound is a consequence of condition \ref{t1Gamma0} alone.

\end{itemize}

\begin{proof}
Take $C$ and $\Gamma_0$ as in assumption \ref{t1Gamma0}. We have
$$ \P(X\ge t\mu)\ge \P(\Gamma_p\supseteq \Gamma_0)= p^{|\Gamma_0|},$$
which proves the lower bound. 

For the upper bound, we use the same
approach as in \cite{JOR}. By Markov's inequality, for every $m$ we
have
$$ \P(X\ge t\mu)\le \frac{\E X^m}{t^m\mu^m}.$$
It remains to show that for a
 sufficiently small $\ct=\ct(q,\xco,t)$ and
$m=\ceil{\ct\mu^{1/q}}$
we have, say,  
$\E X^m\le t^{m/2}\mu^m$.

Having chosen $m-1$ (not necessarily distinct) edges 
$E_1,\dots,E_{m-1}$ of $\cH$, let $N_j$
be the number of edges $E_m$ such that
$\bigabs{E_m\cap\bigcup_{i=1}^{m-1}E_i}=j$,
and let $N_{\ge j}=\sum_{k\ge j} N_k$. We
estimate these numbers as follows:
For $j=0$, 
\begin{equation}
  \label{m0}
N_0\le N_{\ge0}=|\cH|.
\end{equation}
For $1\le j\le q$, by \ref{t1j},
\begin{equation}
  \label{ma}
N_j\le N_{\ge j} = O(m^j\gD_j)=O(m^jN^{q-j}),
\end{equation}
since if
$\bigabs{E_m\cap\bigcup_{i=1}^{m-1}E_i}\ge j$,
then there exists a set $A\subseteq \bigcup_{i=1}^{m-1}E_i$ with $|A|=j$
and $E_m\supseteq A$, and there are $O(m^j)$ such sets
$A$, and at most $\gD_j$ edges $E_m$ for each $A$.
For $j>q$ we  obtain
\begin{equation}\label{mb}
  N_j\le N_{\ge q} = O(m^q)
\end{equation}
from  \eqref{ma} (with $j=q$).

Arguing as in \cite{JOR} we have from
\eqref{m0}--\eqref{mb}, by induction on $m$, 
\begin{equation*}
  \begin{split}
\E X^m&\le 
\mu\lrpar{|\cH|p^k+\sum_{j=1}^qO(m^jN^{q-j})p^{k-j}
+\sum_{j=q+1}^kO(m^q)p^{k-j}}^{m-1}
\\
&=\mu^m\left(1+O(\xc\qw)\sum_{j=1}^q
 \left(\frac{m}{Np}\right)^j+O(1)\frac{m^q}{\mu}\right)^{m-1}
  \end{split}
\end{equation*}
for every $m\ge1$. Now choose
$m=\ceil{\ct\mu^{1/q}}\ge1$, as
said above.
If $m\ge2$, then
$m/(Np)\le2\ct \mu^{1/q}/(Np)=2\ct\xc^{1/q} p^{k/q-1}\le2\ct\xc^{1/q}$,
and thus, using \ref{t1xc},
the term in parenthesis in the last line can be made arbitrarily close
to 1 for all $m\ge2$ by choosing $\ct>0$ small enough;
in particular, it can be
made less than $t^{1/2}$.
Hence, for the chosen $m$, $\E X^m\le
t^{m/2}\mu^m$ if $m\ge2$, and trivially if
$m=1$ too.
This completes the proof.
\end{proof}

 In the case of non-integer $q$, the upper bound gets further away
 from the lower bound. Indeed, we then have the following result.

\begin{theorem}\label{T2} Let  $q$, $\xco$ and $t$ be real
 numbers, with $0< q\le k$, $\xco>0$  and  $t>1$.
There exists a constant $c=c(q,\xco,t)$ such that under the same assumptions
\ref{t1xc}--\ref{t1Gamma0} as in Theorem \ref{T1},
$$\P(X\ge t\mu)
\le\exp\left\{-c\max\left(\mu^{1/q}p^{k(1/\lfloor q\rfloor-1/q)},
\mu^{1/\lceil q\rceil}\right)\right\}$$
and
$$ \P(X\ge t\mu)\ge p^{C\mu^{1/q}}=\exp\{-C\mu^{1/q}\log(1/p)\}.$$
\end{theorem}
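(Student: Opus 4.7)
The lower bound is inherited verbatim from \refT{T1}: assumption \ref{t1Gamma0} gives
\begin{equation*}
\P(X\ge t\mu)\ge \P(\Gamma_p\supseteq\Gamma_0)=p^{|\Gamma_0|}\ge p^{C\mu^{1/q}},
\end{equation*}
and this part of the T1 proof never used that $q$ is an integer.

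For the upper bound I would rerun the moment calculation of \refT{T1} with one crucial change. When $q$ is non-integer, two useful bounds are available on $N_j$ for $j\ge\lceil q\rceil$: the T1-style estimate $N_j\le N_{\ge\lceil q\rceil}=O(m^{\lceil q\rceil})$ coming from \ref{t1jq}, and a second estimate coming from \ref{t1j} applied at the integer $j=\lfloor q\rfloor$, namely $N_j\le N_{\ge\lfloor q\rfloor}=O(m^{\lfloor q\rfloor}N^{q-\lfloor q\rfloor})$. Keeping the better of the two in the recursion and carrying out the same induction as in T1 gives
\begin{equation*}
\E X^m\le \mu^m\left(1+O(\xc^{-1})\sum_{j=1}^{\lfloor q\rfloor}\left(\frac{m}{Np}\right)^j+O\!\left(\frac{\min(m^{\lceil q\rceil},\,m^{\lfloor q\rfloor}N^{q-\lfloor q\rfloor})}{\mu}\right)\right)^{m-1}.
\end{equation*}

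I would then establish the two upper bounds $\P\le\exp\{-c\mu^{1/\lceil q\rceil}\}$ and $\P\le\exp\{-c\mu^{1/q}p^{k(1/\lfloor q\rfloor-1/q)}\}$ separately, by Markov's inequality with two different choices of $m$, each exploiting one branch of the min. For $m=\ceil{\ct\mu^{1/\lceil q\rceil}}$, the first branch gives $m^{\lceil q\rceil}/\mu=\ct^{\lceil q\rceil}$ while $m/(Np)\le\ct\xc^{1/\lceil q\rceil}$ (using $q\le\lceil q\rceil\le k$ and $p\le 1$), so for $\ct$ small the bracket is at most $t^{1/2}$ and Markov yields the first bound. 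For $m=\ceil{\ct\mu^{1/q}p^{k(1/\lfloor q\rfloor-1/q)}}=\ceil{\ct\xc^{1/q}Np^{k/\lfloor q\rfloor}}$, the second branch evaluates to
\begin{equation*}
\frac{m^{\lfloor q\rfloor}N^{q-\lfloor q\rfloor}}{\mu}=\ct^{\lfloor q\rfloor}\xc^{\lfloor q\rfloor/q-1},
\end{equation*}
which is bounded uniformly thanks to $\xc\ge\xco$ and $\lfloor q\rfloor/q-1<0$; meanwhile $m/(Np)=\ct\xc^{1/q}p^{k/\lfloor q\rfloor-1}\le\ct\xc^{1/q}$, so again the bracket is at most $t^{1/2}$ for small $\ct$ and Markov delivers the second bound. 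Combining the two yields the claimed $\exp\{-c\max(\cdot,\cdot)\}$ estimate.

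The main obstacle is the second bound: one has to resist the natural temptation to reuse the T1 estimate $N_j=O(m^{\lceil q\rceil})$ for $j\ge\lceil q\rceil$, which would force $m\le C\mu^{1/\lceil q\rceil}$ and therefore never produce the $\exp\{-c\mu^{1/q}p^{k(1/\lfloor q\rfloor-1/q)}\}$ half of the statement in the regime where that quantity exceeds $\mu^{1/\lceil q\rceil}$; instead one must invoke \ref{t1j} at $j=\lfloor q\rfloor$ to extract the integer-gap factor $N^{q-\lfloor q\rfloor}$.
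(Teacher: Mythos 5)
Your proposal is correct and follows essentially the same route as the paper: the paper also reruns the Theorem~\ref{T1} moment calculation, keeps the two available bounds $N_j\le N_{\ge\lfloor q\rfloor}=O(m^{\lfloor q\rfloor}N^{q-\lfloor q\rfloor})$ and $N_j\le N_{\ge\lceil q\rceil}=O(m^{\lceil q\rceil})$ for $j\ge\lceil q\rceil$, and then takes $m=\lceil c_1\max(\mu^{1/q}p^{k(1/\lfloor q\rfloor-1/q)},\mu^{1/\lceil q\rceil})\rceil$, which is a compact way of packaging your two separate applications of Markov's inequality. Your identification of the key point (one must invoke condition~\ref{t1j} at $j=\lfloor q\rfloor$ rather than only~\ref{t1jq} at $\lceil q\rceil$) is exactly the paper's ``only difference in the proof,'' and the verification that the bracket stays below $t^{1/2}$ goes through with the same bookkeeping (the factors $\xc^{j/q-1}$, $j<q$, absorb the $\xc^{1/q}$ powers thanks to~\ref{t1xc}).
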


\begin{proof}
The only difference in the proof is  when we bound $N_j$ to estimate
$\E X^m$. Namely, for $j\ge\lceil q\rceil$, we either 
use $N_j\le N_{\ge\floor q}=O(m^{\floor q}N^{q-\floor q})$,
or 
$N_j\le N_{\ge\ceil q}=O(m^{\ceil q})$.
We then choose
$$m=\ceil{\ct\max\bigpar{\mu^{1/q}p^{k(1/\lfloor q\rfloor-1/q)},
\mu^{1/\lceil q\rceil}}}$$ 
for a small constant $\ct$.
(We may assume $\mu\ge1$, since otherwise $m=1$ and, recalling that $t>1$,
the estimate $\E X\le t^{1/2}\mu$ is trivial.) 
\end{proof}


\subsection{Integer solutions of linear homogeneous systems}

For an $l\times k$ integer matrix $A$, where $l< k$, assume that every
$l\times l$ submatrix $B$ of $A$ has full rank $r(B)=l=r(A)$. Consider
the system of homogeneous linear equations $Ax=0$, where
$x=(x_1,\dots,x_k)$ is a column vector and $0$ is a column vector of
dimension $l$. We assume also that there exists a distinct-valued positive
integer solution of $Ax=0$. These assumptions seem to be quite
restrictive, but, in fact, we cover at least one important case: the
arithmetic progressions of length $k$ which can be viewed as
distinct-valued solutions to  a system of $l=k-2$ equations.

Let $\Gamma=[N]:=\{1,2,\dots,N\}$ and $0<p=p(N)<1$. Then $\Gamma_p$ is
 a random subset of the first $N$  integers with density $p$. Define
 a $k$-uniform hypergraph $\cH_A=\cH_A(N)$ as the family of all solution sets
 $\{x_1,\dots,x_k\}$ of the system $Ax=0$ with $x_i$ distinct and in $[N]$.
 Let us check that for some $a_0, q$, and $C$ the assumptions \ref{t1xc}--\ref{t1Gamma0} of Theorem
 \ref{T1} hold,  at least in the interesting case 
$\mu=|\cH_A|p^k\ge1$ and
$t\mu\le|\cH_A|$, which can be equivalently restated as
\begin{equation}\label{int}
\mu\ge1\quad\mbox{ and }\quad t\le p^{-k}.
\end{equation}
Set $q=k-l$.

\ref{t1xc}, \ref{t1Gamma0}:
We will show that  there exists $\xco>0$ such that for
sufficiently large $m\le N$  we have
\begin{equation}
  \label{magnus}
|\cH_A(m)|\ge \xco m^q. 
\end{equation}
Taking $m=N$ in \eqref{magnus} we obtain $|\cH_A|\ge\xco N^q$,
which is \ref{t1xc}.
Taking $m=\min\bigpar{\ceil{(t\xco\qw\mu)^{1/q}},N}$ in
\eqref{magnus} and $\Gamma_0=[m]$ we obtain
\ref{t1Gamma0} with $C=2(ta_0^{-1})^{1/q}$, 
using the assumptions in (\ref{int}).

Let $\xx_0\in Z^k$ be a positive integer solution of $Ax=0$. Let $M_0$ by the largest of its
coefficients  $x_{01},\dots,x_{0k}$. Let $\xx_1,\dots,\xx_q$ be $q$ linearly independent integer
solutions of $Ax=0$. (There exist $q$ linearly independent rational solutions, and we may multiply
these by their common denominators and thus assume that they are integer solutions.) Let $M$ be the
maximum of the absolute values of the coefficients in $\xx_1,\dots,\xx_q$.

Given $m$, let $d\=\floor{m/(M_0+1)}$. For any
integers $a_1,\dots,a_q$, the 
sum $d\xx_0+\sum_{i=1}^q a_i\xx_i$ yields an integer solution of $Ax=0$, and these solutions are
all distinct. If further $|a_i|<d/(2qM)$ for all $i$, this solution has all coefficients positive,
less than $m$, and distinct. The number of these solutions is
$\Theta(d^q)=\Theta(m^q)$. Hence, \eqref{magnus} holds.

\ref{t1j}, \ref{t1jq}: By elementary algebraic properties of systems of linear equations, every system $By=c$, where
$B$ is an integer $l\times h$ matrix, has no more than $N^{h-r(B)}$ solutions in $[N]$. Thus,
$\Delta_0=|\cH_A|\le N^{k-l}=N^{q}$. For every subset $J$ of the columns of 
$A$, define $A_J$ as the submatrix obtained from $A$ by removing the columns in $J$. This means
that when we fix values of some $j$ variables, then the obtained system of equations is of the form
$By=c$, where $y$ consists of the remaining unknowns, $B=A_J$, and $J$ is the set of columns of $A$
corresponding to the fixed variables. Hence, 
the number of solutions with $j$ given elements corresponding to
the given columns $J$ is at most 
$N^{k-j-r(A_J)}$. Now, for all $j\le
q=k-l$, if $|J|=j$ then, by our assumption on $A$,  $r(A_J)=l$, so
(summing over $J$) $\Delta_j=O(N^{k-j-l})=O(N^{q-l})$. On the
other hand, if $j>k-l$ then $r(A_J)=k-j$, so 
$\Delta_j=O(N^0)=O(1)$.

Hence, given (\ref{int}), Theorem \ref{T1} applies for such $\cH_A$ with $q=k-l$ and
$\mu=\Theta(N^{k-l}p^k)$.

\begin{example}\label{AP} In particular, we
 obtain quite sharp estimates for the tails of the numbers of arithmetic
 progressions of length $k$ in  $[N]_p$. Indeed, they are given by the system
$x_i-2x_{i+1}+x_{i+2}=0$, $i=1,\dots,k-2$. It is easy to check that
 for $l=k-2$ every $l\times l$ 
submatrix has full rank, and we have the following result.
\begin{corollary}\label{APc} Let $X$ be the number of arithmetic progressions
 of length $k$ in $[N]_p$, $k\ge 3$, 
and let $\mu$, $t>1$, and $p$ satisfy \eqref{int}. 
Then there exist
 $c,C>0$ such that 
 \begin{equation*}
p^{CNp^{k/2}}=\exp\{-CNp^{k/2}\log(1/p)\}\le \P(X\ge
t\mu)\le\exp\{-cNp^{k/2}\}.
\eqno{\qed}
 \end{equation*}
\end{corollary}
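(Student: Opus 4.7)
The plan is to verify that the system defining arithmetic progressions satisfies the full-rank hypothesis imposed on $A$ in the preceding discussion, so that Theorem~\ref{T1} applies with $q = 2$, and then to translate. An AP of length $k$ is precisely a distinct-valued positive integer solution of $x_i - 2x_{i+1} + x_{i+2} = 0$ for $i = 1, \dots, k-2$, so here $l = k-2$ and $q = k-l = 2$. A distinct-valued positive integer solution exists trivially, e.g., $(1, 2, \dots, k)$.

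The substantive step will be to check that every $(k-2) \times (k-2)$ submatrix $B$ of the coefficient matrix $A$ has full rank $k-2$. I will argue as follows: suppose one deletes two columns indexed by $\{j_1, j_2\}$ with $j_1 \ne j_2$, and let $B$ denote the resulting square matrix. If $By = 0$ for some $y \in \bbR^{k-2}$, extend $y$ to a vector $z \in \bbR^k$ by inserting zeros in positions $j_1, j_2$ and placing the entries of $y$ in the remaining positions; then $Az = By = 0$, so $z$ is an arithmetic progression of length $k$. Since $z$ vanishes at two distinct positions, both its initial term and its common difference must vanish, hence $z = 0$ and thus $y = 0$. This shows $B$ is invertible.

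With the rank condition in hand, the preceding discussion of integer solutions of linear systems shows that conditions \ref{t1xc}--\ref{t1Gamma0} of Theorem~\ref{T1} are satisfied for $\cH_A$ with $q = 2$, so Theorem~\ref{T1} applies. In particular $\mu = |\cH_A| p^k = \Theta(N^2 p^k)$ and therefore $\mu^{1/q} = \mu^{1/2} = \Theta(Np^{k/2})$. Plugging this into the bounds
\begin{equation*}
p^{C\mu^{1/q}} \le \P(X \ge t\mu) \le \exp\{-c\mu^{1/q}\}
\end{equation*}
of Theorem~\ref{T1} and absorbing the implicit constants into fresh $c, C > 0$ produces the stated inequalities. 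The only point requiring any real thought is the rank verification (resolved by the two-zeros-kill-an-AP argument above); everything else is bookkeeping from the general theorem.
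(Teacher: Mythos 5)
Your proposal is correct and follows essentially the same route as the paper: verify that the AP coefficient matrix satisfies the full-rank hypothesis on every $(k-2)\times(k-2)$ submatrix, invoke the preceding general discussion to obtain hypotheses \ref{t1xc}--\ref{t1Gamma0} with $q=k-l=2$, and then substitute $\mu^{1/2}=\Theta(Np^{k/2})$ into the conclusion of Theorem~\ref{T1}. The only step the paper dismisses as ``easy to check'' is the rank verification, and your two-zeros-kill-an-AP argument (extend a kernel vector by zeros, note the extension is a real arithmetic progression vanishing at two indices, hence identically zero) is a clean and correct way to supply it.
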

\end{example}

\begin{example} A Schur
 triple is a triple $\{x,y,z\}$ of positive integers such that $x+y=z$, $x\neq
 y$. In this case we have $k=3$, $l=1$ and so, $q=2$.
\begin{corollary}\label{Sch} Let $X$ be the number of Schur triples in
 $[N]_p$, 
and let $\mu$, $t>1$, and $p$ satisfy \eqref{int} with $k=3$. 
Then there exist $c,C>0$ such that 
$$ p^{CNp^{3/2}}=\exp\{-CNp^{3/2}\log(1/p)\}\le \P(X\ge
t\mu)\le\exp\{-cNp^{3/2}\}.
\eqno{\qed}
$$ 
\end{corollary}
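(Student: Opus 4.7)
The plan is to apply Theorem~\ref{T1} directly to the hypergraph $\cH_A$ associated with the $1\times 3$ matrix $A=(1,\,1,\,-1)$ encoding the Schur equation $x+y-z=0$, so that $k=3$, $l=1$ and hence $q=k-l=2$. First I would check the structural hypotheses needed by the general framework of the preceding subsection: every $1\times 1$ submatrix of $A$ has full rank because no entry vanishes, and a distinct-valued positive integer solution exists, for instance $(1,2,3)$. One also needs to note that $X$ as defined in the statement coincides with $|\cH_A(\Gamma_p)|$, since a Schur triple $\{x,y,z\}$ with $x+y=z$ and $x\neq y$ automatically has all three coordinates distinct (equality $x=z$ or $y=z$ would force a vanishing coordinate).

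The general verification carried out just above then shows that, under \eqref{int} with $k=3$, the hypergraph $\cH_A$ satisfies conditions \ref{t1xc}--\ref{t1Gamma0} of Theorem~\ref{T1} with $q=2$ and
$$\mu=\Theta(N^{k-l}p^k)=\Theta(N^2p^3).$$
Consequently $\mu^{1/q}=\Theta(Np^{3/2})$, and substituting this into the conclusion of Theorem~\ref{T1} produces
$$\exp\bigcpar{-CNp^{3/2}\log(1/p)}\le \P(X\ge t\mu)\le\exp\bigcpar{-cNp^{3/2}}$$
for suitable constants $c,C>0$, which is exactly the claimed inequality.

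There is essentially no obstacle beyond bookkeeping: all the substantive work was done earlier when checking that the hypotheses of Theorem~\ref{T1} hold uniformly for every admissible system $Ax=0$. The only mild subtlety is tracking how constants from the general framework (for instance, the $C$ in \ref{t1Gamma0} arising from $|\Gamma_0|\le C\mu^{1/q}$) transform into the final constants $c,C$ appearing in the tail bounds; this is purely formal and requires no new estimate.
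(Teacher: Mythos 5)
Your proof is correct and follows the paper's own approach: the paper verifies the hypotheses of Theorem~\ref{T1} for a general admissible system $Ax=0$ and then simply specializes to $A=(1,1,-1)$, $k=3$, $l=1$, $q=2$, just as you do, with $\mu^{1/q}=\Theta(Np^{3/2})$ yielding the stated bounds. You even add a useful clarification not made explicit in the paper, namely that for positive integers $x+y=z$ with $x\neq y$ forces all three coordinates to be distinct, so the Schur-triple count really does coincide with $|\cH_A[\Gamma_p]|$.
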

\end{example}

\begin{remark}
Arithmetic progressions are partition regular, a name introduced by Rado for all linear systems the
solutions of which satisfy theorems  similar to the van der Waerden theorem. But, in addition, they
are also density regular, which means that every subset of integers of positive density contains
them (Szemer\'edi's theorem). Partition properties of random
 subsets of integers with respect to density regular systems were studied  in \cite{RR}.
 Schur triples form an example of partition regular but not density regular linear system. Partition properties of random
 subsets of integers with respect to Schur triples were studied  in \cite{GRR}.
\end{remark}

\begin{remark}
  We have here treated the set of solutions $x$ to $Ax=0$
  as a hypergraph, \ie, we have treated the solutions $x$ as
  $k$-sets rather than $k$-vectors. This is fine for the
  examples of arithmetic progressions and Schur triples treated above,
  but in general it may be more natural to regard the solutions
  $x$ as vectors (or, equivalently, sequences) in $[N]^k$,
  rather than as sets. We then define $\cH_A$ as the subset
  $\set{x:Ax=0}$ of $[N]^k$. In this way, we
  distinguish between solutions that are permutations of each other
  (for example, $(x,y,z)$ and $(y,x,z)$ in the Schur
  triple case), and we allow repeated values. 

It is possible to prove a version of \refT{T1} for this case,
using essentially the same proof, but the possibility of repeated
elements of $\Gamma=[N]$ complicates the conditions; we now
need  bounds on the number of vectors in $\cH_A$ that have
$j$ coordinates fixed, and at most $\ell$ distinct
values of the other coordinates. We omit the details.
\end{remark}

\subsection{Further examples and remarks}

\begin{example}
 In the dense case, that is, when $q=k$, assumption \ref{t1Gamma0} holds
 trivially by averaging over all subsets $\Gamma_0$ of a
 suitable size, provided the necessary condition $t\le
 p^{-k}$ is satisfied, but this result has been known already
 (\cf{} \cite{inf} and \cite{del}). In particular, this case covers the
 number of matchings of size $k$ in a random $r$-uniform hypergraph
 $G^{(r)}(n,p)$, by considering a $k$-uniform hypergraph $\cH$ where
 the vertices are the edges of the complete $r$-uniform hypergraph
 $K_n^{(r)}$ and the edges are the matchings of size $k$ in
 $K_n^{(r)}$. Then the assumptions of Theorem \ref{T1} hold with
 $q=k$.
\end{example}

\begin{remark}
It can be very hard to improve upon Theorem
 \ref{T2}, because it contains the triangle count problem from
 \cite{JOR}. Indeed,  with $\Gamma=\binom{[n]}2$ and $\cH$ being the
 family of the edge sets of all triangles in $K_n$, we have
$N=\binom n2$ and 
 $|\cH|=\Theta(n^3)=\Theta(N^{3/2})$, so $q=3/2$. To get the result
 from \cite{JOR},  we would need to improve the upper bound, but this
 seems to be impossible without ``seeing'' the vertices of the random
 graph.
\end{remark}

\section{Rooted subgraphs of random graphs}

A \emph{rooted graph} $(R,G)$ is a  graph $G$ with a
fixed independent set $R$; we also say that the graph is
\emph{rooted at $R$}. (For simplicity, we sometimes use $G$ to
 denote the rooted graph $(R,G)$ when $R$ is clear from the
 context.)
Counting rooted subgraphs of a
random graph $G(n,p)$ with a fixed set $R$ of roots plays an important
role in studying the so called extension statements and 0--1 laws in
random graphs, see, e.g., \cite[Sections 3.4 and 10.2]{JLR}. Another
application can be found in \cite{Cops}, where a sharp concentration
of the number of paths of given length connecting two given vertices
is utilized.
Here we give a quite accurate estimate of the upper tail of the number
of rooted copies of a given rooted graph in $G(n,p)$; the result is similar
to our main result in \cite{JOR} for unrooted graphs, but somewhat
simpler, except for a new complication for constant $p$.

A rooted graph $(R',H)$ is a \emph{rooted subgraph} of
$(R,G)$ if $H$ is a subgraph of $G$ and
$R'=V(H)\cap R$. We let $N^R(G,H)$ denote the number of rooted
copies of $H$ in $G$.

Given a rooted graph $(R,G)$ and  a graph $F$ on the
vertex set $V(F)=[n]=\{1,2,\dots,n\}$,
let $r=|R|$ and regard $F$ as rooted on $[r]=\set{1,\dots,r}$;
we say that a rooted subgraph of $([r],F)$ isomorphic to
$(R,G)$ is an
\emph{$R$-rooted copy of $G$ in $F$}.
Thus $N^R(F,G)$ is the number of
$R$-rooted copies of $G$ in $F$.
In particular, when $F$ is a random graph $G(n,p)$, we
let the random variable
$X=X^R_G=X_G^R(n,p)$ be the number $N^R(G(n,p),G)$ of
{$R$-rooted copies of $G$ in $G(n,p)$}.
We further define
\begin{equation}
  \label{mu}
\mu=\mu_R(G,n,p)\=\E X_G^R=N^R(K_n,G)p^{\ex G}.
\end{equation}

For a subgraph $H$ of $G$ let $H-R$ be the graph obtained from $H$ by
deleting all vertices of $R$ (together with incident edges), and
define
\begin{equation}
  \label{Psi}
\Psi_H^R=\Psi_H^R(n,p)\=n^{v\xpar{H-R}}p^{\ex H}.
\end{equation}
Note that $\Psi_H^R=\Theta(\E X_H^{R'})$, with
$R'=R\cap V(H)$, but as defined, it does not depend on the actual set $R'$ of roots of $H$.

Recall that, for a graph $H$,  the fractional independence number
$\alpha^*(H)$ is defined as the maximum value of  $\sum_ix_i$ over all
assignments  $(x_i)_{i\in V(H)}$ such that $0\le x_i\le1$ for all
vertices $i\in V(H)$ and  $x_i+x_j\le 1$ for every edge $ij\in H$.
We let
\begin{equation}
  \label{MRG}
\MRG=\MRG(n,p)
=\min_{H\subseteq G, \ex H>0}\left(\Psi_H^R\right)^{1/\alpha^*(H-R)}.
\end{equation}
We further let
\begin{equation}
    \label{mRG}
m_R(G)\=\max_{H\subseteq G, \ex H>0}\frac{\ex H}{v(H-R)}>0,
\end{equation}
and note that \eqref{MRG}, \eqref{Psi} and
\eqref{mRG} imply that
\begin{equation}\label{m1}
  \MRG < 1 \iff n p^{m_R(G)} <1.
\end{equation}
 By the same argument as for the unrooted case
in \cite[Section 3.1]{JLR}, it is easy to show that $p=n^{-1/m_R(G)}$
is the threshold for the appearance of an $R$-rooted
copy of $G$ in $G(n,p)$.

Let $\erg=\ex G-\ex{G-R}$ be the number of edges in $G$
incident with the root set $R$.
We assume below that $\erg>0$; the case $\erg=0$ is
uninteresting since then $X^R_G$ equals the number of copies of
the unrooted graph $G-R$ in
$\gnp-[r]$, which we identify with $G(n-r,p)$,
so $X^R_G(n,p)=X_{G-R}(n-r,p)$
and we may apply the results of \cite{JOR}.

\begin{theorem}\label{T3}
For every rooted graph $(R,G)$ with $\erg>0$ and for every
  $t>1$ there exist constants $c=c(t,G)$ and
  $C=C(t,G)$ such that for all $n\ge \vx G$,
with $p_1\=t^{-1/\erg}$ and $p_2\=t^{-1/\ex G}$:
\begin{abenumerate}
  \item
If $p\le n^{-1/m_R(G)}$,
then
\begin{equation*}
p^{C}=\exp\{-C \log(1/p)\}\le \P(X_G^R\ge t\mu)\le\exp\{-c\}.
\end{equation*}

\item
If $n^{-1/m_R(G)}\le p\le p_1$,
then
\begin{equation*}
p^{C \MRG}=\exp\{-C \MRG\log(1/p)\}
\le \P(X_G^R\ge t\mu)
\le\exp\{-c \MRG\}.
\end{equation*}
\item
If $p_1\le p\le p_2$,
then
\begin{equation*}
\exp\{-C(n+(p-p_1)^ 2n^ 2)\}
\le \P(X_G^R\ge t\mu)
\le\exp\{-c(n+(p-p_1)^ 2n^ 2)\}.
\end{equation*}
\item
If $p_2< p\le 1$,
then
\begin{equation*}
\P(X_G^R\ge t\mu)=0.
\end{equation*}
\end{abenumerate}
\end{theorem}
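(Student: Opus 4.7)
My plan is to follow the moment-method strategy of \cite{JOR} for the upper bounds and a planting construction for the lower bounds, with each of the four cases being a refinement of this general scheme. Case~(d) is immediate: since $X\le N^R(K_n,G)=\mu/p^{\ex G}$, we have $X<t\mu$ deterministically whenever $p^{\ex G}>1/t$, i.e., whenever $p>p_2$.

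For the upper bounds in cases~(a)--(c), I would apply Markov's inequality to $\E X^m$ for an appropriate $m$. Writing
\begin{equation*}
\E X^m=\sum_{(\alpha_1,\dots,\alpha_m)}p^{|E(\alpha_1\cup\cdots\cup\alpha_m)|}
\end{equation*}
and arguing inductively in $m$ as in the proof of \refT{T1}: a new rooted copy $\alpha_m$ appended to the union $F$ of the previous ones meets $F$ in some rooted subgraph $(R\cap V(H),H)\subseteq G$, and the number of choices of $\alpha_m$ for a fixed $H$ is $O(n^{v(G-R)-v(H-R)})$, each contributing a factor $p^{\ex G-\ex H}$. Summing over overlap patterns $H$ and grouping them via the fractional independence number $\alpha^*(H-R)$ produces the multiplicative factor in which $\MRG$ from \eqref{MRG} arises as the threshold at which all the induced terms become $\le t^{1/2}-1$. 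Choosing $m=\ceil{c\MRG}$ in case~(b) yields $\exp\{-c\MRG\}$, and $m=O(1)$ in case~(a) yields the constant bound $\exp\{-c\}$.

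For the lower bounds in (a) and (b) I would use the rooted analogue of the planting construction of \cite{JOR}. In (a) one plants a single rooted copy of $G$, at cost $p^{\ex G}=p^{O(1)}$. In (b) one selects $H\subseteq G$ achieving the minimum in \eqref{MRG} and plants a collection of rooted copies of $H$ associated to an optimal fractional independent set of $H-R$; conditional on the planted edges being present, this deterministically forces $X\ge t\mu$, at planting cost $p^{O(\MRG)}$, which matches the claim.

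The principal obstacle is case~(c), the ``new complication for constant~$p$'' foreshadowed before the theorem: here $p_1=t^{-1/\erg}$ and $p_2=t^{-1/\ex G}$ are both bounded away from $0$ and from $1$, and the $\MRG$-based bound degenerates because $\log(1/p)=O(1)$. The two terms of the rate correspond to two complementary strategies, illustrated neatly by the triangle $G=K_3$ rooted at one vertex. The linear term $n$ arises from planting all $R$-incident edges between $R$ and some $V'\subseteq[n]\setminus R$ of linear size, at cost $\exp\{-\Theta(n)\}$; the quadratic term $(p-p_1)^2n^2$ arises from a Chernoff-type deviation forcing the number of ``$G-R$-type'' edges in $V'$ to exceed its mean by $\Theta((p-p_1)n^2)$, at cost $\exp\{-\Theta((p-p_1)^2 n^2)\}$. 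Combining the two plantings gives the lower bound, and the matching upper bound is the main technical difficulty: one must extend the moment computation to $m=\Theta(n+(p-p_1)^2n^2)$ and argue, by decomposing according to the number of $R$-incident edges actually present in the configuration, that no strategy cheaper than this combination can yield $X\ge t\mu$ uniformly for $p_1\le p\le p_2$.
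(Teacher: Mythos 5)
Your outline is on target for cases (a), (b), and (d) --- the trivial cutoff at $p_2$, the moment method with $m=\lceil c\MRG\rceil$ for the upper bound in (b), and the blow-up/planting construction from \cite{JOR} for the lower bound --- and your heuristic for the lower bound in (c) (plant all root-incident edges, then force a Chernoff-type excess of edges in $[n]\setminus R$) is exactly the right picture, which the paper makes rigorous by conditioning on the event $\{Z\ge(1+3\delta)p\binom{n-r}{2}\}$ and on $\cE^R$ and then applying a Paley--Zygmund-type step (Lemma 3.2 of \cite{JOR}).

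However, there are two genuine gaps. The more serious one is the upper bound in case (c). You propose to ``extend the moment computation to $m=\Theta(n+(p-p_1)^2n^2)$,'' but that is a dead end: the whole point of the moment bound \eqref{exm} is that the factor $N^{R'}\bigl(n,(m-1)e(G),H\bigr)/\Psi^R_H=\Theta\bigl((m/\MRG)^{\alpha^*(H-R)}\bigr)$ stays small, which forces $m=O(\MRG)$; by \eqref{lessn}, $\MRG\le np\le n$, so one cannot push $m$ to order $n^2$. The paper instead sidesteps the moment method entirely for this regime: since every rooted copy of $G$ projects to a copy of $G-R$ with a fixed multiplicity $g$, one has $X^R_G(n,p)\le g\,X_{G-R}(n-r,p)$ and $\mu=g\,\mu(G-R,n-r,p)p^{\erg}$, so $\P(X^R_G\ge t\mu)\le\P\bigl(X_{G-R}(n-r,p)\ge\tilde t\,\mu(G-R,n-r,p)\bigr)$ with $\tilde t=tp^{\erg}\in[1,t]$, and the already-known \emph{unrooted} sharp upper bound from \cite{JOR} gives $\exp\{-c(\tilde t-1)^2n^2\}\le\exp\{-c'(p-p_1)^2n^2\}$; the $n$ term in the exponent then comes from taking the geometric mean with the $\MRG$-bound, which is $\exp\{-\Theta(n)\}$ in this range. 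Nothing like this appears in your plan.

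The second gap is the lower bound in (b) for $p$ bounded away from $0$. The blow-up graph $F$ built from the optimal fractional assignment has $v(F)=\Theta(tM)=\Theta(tnp)$ vertices, and one needs $v(F)\le n-r$ to embed it; this works only for $p\le p_0:=(3v_Gt)^{-1}$, which is a constant strictly below $p_1$. For $p_0<p\le p_1$ your planting construction does not fit, and the paper instead uses the same Chernoff/conditioning argument as in case (c), with $\delta=1/n$ (the case $e(G-R)=0$ being handled separately by planting only the root edges). Your sketch silently treats the planting construction as working throughout the range $n^{-1/m_R(G)}\le p\le p_1$, which it does not.

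A minor point: in case (a), planting a single rooted copy of $G$ gives $X\ge1$ but not necessarily $X\ge t\mu$, since $\mu$ can still be $\gg1$ just below the threshold; what is actually planted is the blow-up $F$ from the (b) argument, which degenerates to an $O(1)$-size graph because $\MRG\le1$ in this range.
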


Note that  $0<p_1\le p_2<1$, and that $p_1$ and
$p_2$ do not depend on $n$. Before giving the proof, we
make some comments.

  \begin{romenumerate}
\item\label{t3a}
  Case (d) is trivial, because $p>p_2\iff t
  p^{e(G)}>1\iff t\mu>N^R(K_n,G)$, see \eqref{mu},
so it is impossible
  to get at least $t\mu$ rooted copies of $G$ on $n$
  vertices.

\item\label{t3b}
Case (a) is uninteresting and included only to
  show that the estimates in (b) extend in a continuous way to smaller
  $p$.
(Note that $\MRG=1$ at the threshold $p=
  n^{-1/m_R(G)}$, \cf{} \eqref{m1}.)
Indeed, in case (a) we are below the threshold, so typically
  $X^R_G=0$.

\item\label{t3c}
If $e_R(G)=e(G)$, or equivalently $e(G-R)=0$, \ie,
   all edges in $G$ have a root as one endpoint, then
$p_1=p_2$ and case (c) disappears, so that (b) is valid
  until the cutoff at $p_2$. For all other $G$,
  $p_1<p_2$ and case (c) appears, so there is a phase transition at $p_1$.

\item\label{t3d}
In the unrooted case in \cite{JOR} there is also a phase
transition at $p=n^{-1/\Delta_G}$. This has no
counterpart in the rooted case.

\item\label{t3mn} 
Since $\erg>0$, $G$ has a rooted subgraph $H_0$ which is just a single 
edge with one endpoint in $R$;
we have $\Psi_{H_0}^R=np$ and  $\alpha^*(H_0-R)=\alpha^*(K_1)=1$, so
\begin{equation}\label{lessn}
\MRG\le \left(\Psi_{H_0}^R\right)^{1/\alpha^*(H_0-R)}=np\le n.
\end{equation}
Hence, the upper bound in (b) is never stronger than $\exp\xcpar{-\Theta(n)}$.
\item\label{t3log} 
In (b)  the exponents in the lower and upper bound are of the same order
of magnitude  except for the
logarithmic term $\log(1/p)$; this inaccuracy disappears obviously for $p$ constant.

\item\label{t3e}
For any fixed $p>0$ (or $p=p(n)\in[p_0,1]$ for some constant  $p_0>0$),
$\Psi^R_H=\Theta(n^{v(H-R)})$. Since
$\gax(H-R)\le v(H-R)$ for all $H\subseteq G$,
with equality for at least one $H$ with $e(H)>0$,
\viz{} a single rooted edge, \eqref{MRG} shows that
  then $\MRG=\Theta(n)$. Consequently, the result in
  (b) can  be written for constant $p\le p_1$ as
$\P(X_G^R\ge t\mu)=\exp\set{-\Theta(n)}$. This shows that the
  bounds in (b) and (c) agree at $p=p_1$.
Moreover, we obtain the following corollary.
  \end{romenumerate}

\begin{corollary}\label{C3}
With assumptions and notations
as in \refT{T3}, assume further that $p$ is  fixed.
  \begin{abenumerate}
\item
If $0< p\le p_1$,
then
\begin{equation*}
\P(X_G^R\ge t\mu)=\exp\set{-\Theta(n)}.
\end{equation*}
\item
If $p_1< p\le p_2$,
then
\begin{equation*}
\P(X_G^R\ge t\mu)=\exp\set{-\Theta(n^ 2)}.
\end{equation*}
\item
If $p_2< p\le 1$,
then
\begin{equation*}
\P(X_G^R\ge t\mu)=0.
\end{equation*}
\end{abenumerate}
\end{corollary}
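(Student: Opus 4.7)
The plan is to deduce this corollary directly from Theorem~\ref{T3} by observing that for fixed $p\in(0,1]$ all the quantities appearing in the bounds of that theorem simplify to powers of $n$ with constant factors. Part (c) is immediate from \refT{T3}(d), since the cutoff $p_2$ does not depend on $n$.

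For part (a), I would first note, as already recorded in comment \ref{t3e} preceding the corollary, that fixed $p>0$ gives $\Psi_H^R=\Theta(n^{v(H-R)})$ for every $H\subseteq G$. Combined with $\alpha^*(H-R)\le v(H-R)$ and equality for a single rooted edge $H_0$, the definition \eqref{MRG} yields $\MRG=\Theta(n)$. Since $p$ is fixed and the threshold $n^{-1/m_R(G)}\to 0$, for all sufficiently large $n$ we are in case (b) of \refT{T3}, whose bounds read $\exp\{-C\MRG\log(1/p)\}\le\P(X_G^R\ge t\mu)\le\exp\{-c\MRG\}$. Since $\log(1/p)=\Theta(1)$ and $\MRG=\Theta(n)$, both bounds collapse to $\exp\{-\Theta(n)\}$. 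The finitely many small values of $n$ are absorbed by adjusting constants (the probability is always between $0$ and $1$ and the relevant statement is asymptotic).

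For part (b), I would invoke \refT{T3}(c) directly. Since $p_1<p$ is fixed, $(p-p_1)^2$ is a positive constant, so $n+(p-p_1)^2n^2=\Theta(n^2)$, and the bounds become $\exp\{-\Theta(n^2)\}$ as claimed. Note that this matches the part (a) bound at the boundary $p=p_1$, as already observed in comment~\ref{t3e}.

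There is essentially no obstacle here: the corollary is a mechanical specialization of \refT{T3} under the assumption that $p$ is bounded away from $0$ and $1$, using only the identity $\MRG=\Theta(n)$ for constant $p$ (which is already recorded in the commentary) and the fact that in case (c) the quadratic term dominates. The only minor care needed is to ensure that for fixed $p$ one is above the threshold $n^{-1/m_R(G)}$ for all large $n$, so that case (a) of the theorem never interferes.
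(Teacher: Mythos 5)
Your proposal is correct and follows essentially the same path as the paper, which in fact never gives a displayed proof of Corollary~\ref{C3}: comment \ref{t3e} immediately preceding the corollary already records that fixed $p$ gives $\MRG=\Theta(n)$ and that part (a) is then just a restatement of Theorem~\ref{T3}(b), while parts (b) and (c) are the mechanical specializations of Theorem~\ref{T3}(c) and (d) that you describe. Your added remark that for fixed $p>0$ one is above the threshold $n^{-1/m_R(G)}$ for all large $n$ (so case (a) of the theorem never interferes) is a useful small point the paper leaves implicit.
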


The sudden jump in the exponent from $n$ to $n^2$ at $p=p_1$
(for $G$ with $e(G-R)>0$, so $p_1<p_2$) may be
surprising, and has no counterpart in the unrooted case in
\cite{JOR}.
It may roughly be explained as follows (see the proof):
If $p<p_1$, then it suffices (typically) to have all $\Theta(n)$ edges from
the roots present in $\gnp$ in order to have more than $t\mu$ rooted
copies of $G$. However, if $p>p_1$, this is not enough,
and we need also (typically) a larger proportion than $p$ of the $\binom{n-r}2$
other possible edges, which by the usual Chernoff bound has
probability only $\exp\set{-\Theta(n^2)}$.

\begin{proof}[Proof of \refT{T3}]
We mostly follow closely the proof for the unrooted case from
\cite{JOR}, and therefore omit some details. As remarked above, (d) is
trivial. Part (a) can be proved by a modification of the argument
below, replacing $\MRG$ by 1; we omit the details and refer
to the corresponding argument in \cite{JOR}. Hence we
consider only (b) and (c).
We let $C_1,C_2\dots$ and $c_1,c_2,\dots$ denote
constants that may depend on $G$ and $t$, but not on
$n$ or $p$.

\pfitemx{Upper bounds} If $(R,H)$ is a rooted graph, let $N^R(n,m,H)$ be the maximum of $N^R(F,H)$
over all rooted graphs $F$ with $v(F)\le n$ and $e(F)\le m$ and with a set of roots of size $|R|$.
In other words, $N^R(n,m,H)$ is the maximum number of copies of $(R,H)$ that can be packed in $n$
vertices and $m$ edges with a given set of $|R|$ roots.

Let us start with the observation that if the minimum degree $\delta(H)>0$ then
\begin{equation}\label{NRN1}
N^R(n,m,H)\le N^R(2m,m,H)=O(N(2m,m,H-R)).
\end{equation}
Indeed, for any $F$ with  $\vx F\le n$, $\ex F\le m$, and $\delta(F)>0$, we have $\vx F\le 2m$, so the left hand side
inequality follows. To prove the right hand side inequality,  assume that $F$ and $H$ have the same set of
roots $R$. Then
$$N^R(F,H)\le N(F-R,H-R)\times 2^{|R|(\vx H-|R|)}=O(N(2m,m,H-R)).$$

Now, to prove the upper bound on $\P(X_G^R\ge t\mu)$, as before, we
want to show that, say,  $\E X^m\le t^{m/2}\mu^m$, where $X=X_G^R$,
$\mu=\E X$, and $m$ is suitably large.
Similarly as in \cite{JOR} and, as a matter of fact, similarly to the
proof of Theorem \ref{T1} here, an inductive argument yields, for all
$m\ge1$,
\begin{equation}\label{exm}
  \E X^m
\le \mu^m\biggpar{1+\CC \sum_{H\subseteq G}
  \frac{N^{R'}\bigpar{n,(m-1)e(G),H}}{\Psi^R_H}}^{m-1},
\end{equation}
where the sum extends over all rooted subgraphs $(R',H)$ of $(R,G)$ with $\gd(H)>0$.
($H$ corresponds to the subgraph spanned by the edges in the 
intersection of the $m$th copy of $G$ and the union of the
$m-1$ previous copies, and as such has $\delta(H)>0$.)

We take $m\= \ceil{\cc\MRG}\ccdef\cca$ for a suitable small
constant $\cca\in(0,1)$ to be fixed later. 
By \eqref{NRN1}, \cite[Theorem 1.3]{JOR} and \eqref{MRG}, for every
$H\subseteq G$ with $\gd(H)>0$, 
assuming $m\ge2$,
\begin{equation*}
  \begin{split}
N^{R'}\bigpar{n,(m-1)e(G),H} &\le \CC N\bigpar{2(m-1)e(G),(m-1)e(G),H-R}
\\&
=\Theta(m^{\alpha^*(H-R)})
=\Theta\bigpar{(\cca\MRG)^{\alpha^*(H-R)}}
\\&
\le \CC \cca \Psi^R_H.
  \end{split}
\end{equation*}
Hence, \eqref{exm} yields (the case $m=1$ being
trivial),
$\E X^ m \le \mu^ m (1+\CC \cca)^{m-1}$. We choose $\cca$ so small
that $1+\CCx \cca\le t\qq$, and
then
Markov's inequality yields
\begin{equation}\label{ml4}
  \P(X\ge t\mu)\le \frac{\E  X^m}{t^m\mu^m}
\le t^{-m/2} \le \exp\set{-\cc\MRG}.
\end{equation}
In particular, this yields the upper bound in (b).

For the  upper bound in (c), we note that each rooted copy of
$G$ in $K_n$ yields a copy of $G-R$ in
$K_n-R=K_{n-r}$; conversely each copy of $G-R$ in
$K_{n}-R$ can be extended to exactly $g$ rooted
copies of $G$ in $K_n$, for some integer $g\ge1$
depending on $G$.
Hence,
$X^R_G(n,p)\le g X_{G-R}(n-r,p)$.
Further, $N^R(K_n,G)=g N(K_{n-r},G-R)$ so
\begin{equation}\label{ml0}
  \begin{split}
\mu&
=N^R(K_n,G)p^{e(G)}
=g N(K_{n-r},G-R)p^{e(G-R)+\erg}
\\&
= g \mu(G-R,n-r,p)p^{\erg}.
  \end{split}
\end{equation}
Consequently,
\begin{equation}\label{ml1}
  \begin{split}
  \P(X^R_G\ge t\mu)
&\le
\P\bigpar{gX_{G-R}(n-r,p)\ge t g
  \mu(G-R,n-r,p)p^{\erg}}
\\&
=
\P\bigpar{X_{G-R}(n-r,p)\ge tp^{\erg}  \mu(G-R,n-r,p)}.
  \end{split}
\end{equation}
Let $\tit\=tp^{\erg}$, and note that, for (c),
$1\le\tit\le t$.
By \cite[Theorems 1.2 and 1.5, and Remark 8.2]{JOR}, recalling
that $t$ is fixed and $p\ge p_1$,
\begin{equation}\label{ml2}
  \P\bigpar{X_{G-R}(n-r,p)\ge \tit \mu(G-R,n-r,p)}
\le \exp\set{-\cc(\tit-1)^2 n^2}. 
\end{equation}
Further,
\begin{equation*}
  \tit-1=tp^{\erg}-1=(p/p_1)^{\erg}-1\ge p/p_1-1\ge p-p_1,
\end{equation*}
so \eqref{ml1}--\eqref{ml2} yield
\begin{equation}\label{ml3}
  \P(X^R_G\ge t\mu) \le \exp\set{-\ccx(p-p_1)^2n^2}.
\end{equation}
The upper bound in (c) now follows by taking the geometric mean of
\eqref{ml4} and \eqref{ml3}, noting that in this range of $p$,
$\MRG=\Theta(n)$ as remarked in \ref{t3e} above.

\pfitemx{Lower bounds}
Let $H$ be a subgraph of $G$ such that $\ex H>0$ and
$$M:=\MRG= \left(\Psi_{H}^R\right)^{1/\alpha^*(H-R)}.$$
Since we consider parts (b) and (c) only,  $M\ge1$ by \eqref{m1}.

Set $p_0=(3v_Gt)^{-1}$ and assume first that $p\le p_0$. (Note that
$p_0<t\qw\le p_1$.)
We construct, as in \cite{JOR}, a graph $F$ with
\begin{equation}\label{con}
\vx F\le 3(v_G-r)tM,\quad \ex F=O(M),\quad\mbox{ and }\quad N(F,H-R)\ge 2t\Psi^R_H.
\end{equation}
This is done as follows. Let $(x_i)_{i\in V(H-R)}$ be an optimal
assignment for the fractional independence problem, that is, $0\le
x_i\le1$, $x_i+x_j\le 1$ for every edge $ij\in H-R$, and
$\sum_ix_i=\alpha^*(H-R)$.
Construct $F$ by blowing up each vertex of $H-R$ to a set of $\lceil
2tM^{x_i}\rceil$ vertices and replacing each edge of $H-R$ by the
complete bipartite graph. This yields (\ref{con}), where we have put 3
rather than 2 because of the ceiling. Now, by
\eqref{lessn},
$$\vx F\le 3(v_G-r)tM\le 3(v_G-r)tnp\le (1-r/v_G)n\le n-r.$$

We may thus
fix a copy $F_1$ of $F$ with $V(F_1)\subseteq[n]\setminus[r]$; we
further let
$F_2$ be $F_1$ enlarged by adding all roots $1,\dots,r$ together with
all $r\vx{F_1}=O(M)$ edges between the roots and $V(F_1)$. Now, exactly
as in \cite{JOR}, it follows from \cite[Lemma 3.3]{JOR} that
$$\P(X_G^R\ge t\mu)\ge\frac14p^{\ex G}\P(G(n,p)\supseteq
F_2)=\frac14p^{\ex G+\ex{F_2}}=p^{\Theta(M)}.$$
This proves the lower bound in (b) when $p\le p_0$.

Assume now that $p_0\le p\le p_2$ and note that  the lower
bound we want to prove can be written as
$\exp\set{-\Theta(n)}$, see \ref{t3e} above or Corollary \ref{C3}.

Consider first the case $e(G-R)=0$ and observe that then 
the maximum number
of copies of $G$ are obtained as soon as all edges from the
roots appear, so, denoting
this event by $\cE^R$,
\begin{equation*}
  \P(X^R_G\ge t\mu) \ge \P\bigpar{\cE^R}
  = p^{r(n-r)}
\ge e^{-\CC n},
\end{equation*}
which proves the lower bound in (b) in this case.
(Since $e(G-R)=0$ implies $p_1=p_2$, (c) is trivial.)

Thus, it remains to consider the case when  $p_0\le p\le p_2$ and $e(G-R)>0$.
We note first the trivial bound
\begin{equation}\label{allt}
  \P(X^R_G\ge t\mu) \ge \P\bigpar{G(n,p)=K_n}
  = p^{\binom n2}
\ge e^{-\CC n^2}.
\end{equation}
 Let $Z$ be the
number of edges in $G(n-r,p)$. Since $Z$ has binomial
distribution $\Bin\bigpar{\binom{n-r}2,p}$ with mean $p\binom{n-r}2$,
it is easily seen that if $(1+3\gd)p\le1$,
and $\ced$ is the event \set{Z\ge(1+3\gd)p\binom{n-r}2},
then
\begin{equation}\label{ced}
\P(\ced)\ge \cc \exp\set{-\CC n^2 \gd^2}.   \ccdef\cced\CCdef\CCed
\end{equation}
(The Chernoff bounds are essentially sharp, as is easily seen using 
Stirling's formula.)
The number $X_{G-R}(n-r,p)$ of copies of $G-R$ in
$G(n-r,p)$ is a sum of $N(K_{n-r},G-R)$ indicator
variables $I_\ga$. Conditioned on $Z=z$, each of them has the
expectation
\begin{equation}\label{er1}
  \P(I_\ga=1\mid Z=z)=\frac{(z)_{e(G-R)}}{\lrpar{\binom{n-r}{2}}_{e(G-R)}}
=\biggpar{\frac{z}{\binom{n-r}{2}}}^{e(G-R)}\Bigpar{1-O(z\qw)}.
\end{equation}
Let $\nd\=(1+3\gd)p\binom{n-r}2$. If
$\gd\ge n\qw$,
$z\ge\nd$ and $n$ is large enough, then
\eqref{er1} yields
\begin{equation*}
  \begin{split}
  \P(I_\ga=1\mid Z=z)
&\ge
(1+3\gd)^{e(G-R)} p^{e(G-R)}\Bigpar{1-O(n^{-2})}
\\
&\ge
(1+2\gd) p^{e(G-R)}.
  \end{split}
\end{equation*}
Consequently, if $\gd\ge n\qw$ and $n$ is large
enough, then
$ \P(I_\ga=1\mid \ced)\ge(1+2\gd) p^{e(G-R)}$, and summing
over $\ga$ we find
$$\E(X_{G-R}(n-r,p)\mid\ced)\ge(1+2\gd)\E X_{G-R}(n-r,p)
=(1+2\gd)\mu({G-R})
.$$
Hence, by Lemma 3.2 of \cite{JOR}, as in the proof of Lemma  3.3
therein, with $1/2$ replaced by $\frac{1+\delta}{1+2\delta}$, we
obtain
$$
\P\bigpar{X_{G-R}\ge(1+\delta)\mu({G-R})\mid\ced}
\ge
\parfrac{\delta}{1+2\gd}^2\frac{\mu({G-R})}{N(K_{n-r},G-R)}
\ge \cc\delta^2. \ccdef\ccgml
$$
Assuming also the presence of all edges from the roots,
\ie, the event $\cE^R$, we have
$X^R_G=g X_{G-R}$ (where $g$ is as in the proof of the
upper bound); further, by \eqref{ml0},
$\mu= g \mu(G-R)p^{\erg}$; hence the inequality
$X_{G-R}\ge(1+\delta)\mu({G-R})$  is equivalent to
\begin{equation}\label{er2}
X^R_G\ge(1+\delta)p^{-\erg}\mu.
\end{equation}
Consequently,
\begin{equation*}
  \begin{split}
\P\bigpar{X^R_G\ge(1+\delta)p^{-\erg}\mu\mid\cE^R,\ced}
\ge
\P\bigpar{X_{G-R}\ge(1+\delta)\mu({G-R})\mid\ced}
\ge\ccgml\gd^ 2
  \end{split}
\end{equation*}
and thus, by \eqref{ced},
\begin{equation*}
  \begin{split}
\P\bigpar{X^R_G\ge(1+\delta)p^{-\erg}\mu}
&\ge
 \ccgml\gd^2\P(\ced\text{ and }\cE^R)
=
 \ccgml\gd^2\P(\ced)\P(\cE^R)
\\&
\ge\cc n\qww \exp\{-\CCed\delta^2n^2\}p^{rn}=\exp\{-\Theta(\delta^2n^2+n)\},
  \end{split}
\end{equation*}
provided $1/n\le\delta\le\frac13(p^{-1}-1)$ and $n$ is large enough.

For $p_0\le p\le p_1$, we choose
$\gd=n\qw$; then the \rhs{} of
\eqref{er2} is greater than
$p_1^{-\erg}\mu=t\mu$,
so we obtain
$$\P(X^R_G\ge t\mu)\ge\exp\{-\Theta(n)\},$$
which as remarked above is equivalent to the lower bound in (b) for
this range of $p$.

Finally, if $p_1\le p\le p_2$, we take
$$\gd\=\max\left\{tp^{\erg}-1,1/n\right\}
 =\max\left\{(p/p_1)^{\erg}-1,1/n\right\}=\Theta(p-p_1+1/n),$$
so that the \rhs{} of \eqref{er2} is again at least $t\mu$.
This yields the lower bound in (c) when $n$ is large enough and $p\ge
 p_1$ is small enough to guarantee that $\delta\le\frac13(p^{-1}-1)$.
For larger $p$, as well as for small $n$, we simply  use \eqref{allt}.
This completes the proof of the lower bound in (c).
\end{proof}

\subsection{Examples and remarks}

It is easy to see that the minimum defining $M=\MRG$ in
\eqref{MRG} is achieved by a
subgraph $H$ of $G$ such that $H-R$ is connected and, for every vertex
$v\in H$, $H$ contains all edges leading from $v$ to $R$.  
These observations simplify 
computations of the bounds in Theorem \ref{T3}.

\begin{example}
 {\bf Cliques rooted at a vertex.} 
Let $G=K_k$, $k\ge2$, and $r=|R|=1$. Then $m_R(G)=k/2$  and $e_R(G)=k-1$.
To find $M$, consider first the candidates $H=K_2$ (with 
the root contained in $H$) and $H=G=K_k$.
For $H=K_2$, we have, as shown in general in comment
\ref{t3mn} above, 
$\lrpar{\Psi_{H}^R}^{1/\alpha^*(H-R)}=np$.
For $H=K_k$ we have
$\Psi_{K_k}=n^{k-1}p^{\binom k2}$ and $\ga^*(K_k-R)=\ga^*(K_{k-1})=(k-1)/2$,
and thus 
$\bigpar{\Psi_{K_k}^R}^{1/\alpha^*(K_k-R)}=n^2p^k$.
Hence,
\begin{equation}\label{ocab}
 M\le \min\bigcpar{ np,n^2p^k};
 \end{equation}
we will show that equality holds.

To this end, consider a general $H\subseteq G$ with $e(H-R)>0$
and let $F\=H-R$.
Then $e(H)\le e(F)+v(F)$ and so, see \eqref{Psi},
\begin{equation}\label{f1}
  \begin{split}
\frac{\Psi_H^R}{(np)^{\gax(H-R)}}
&\ge	\frac{n^{v(F)}p^{e(F)+v(F)}}{(np)^{\gax(F)}}
\\&
=\bigpar{np^{k-1}}^{v(F)-\gax(F)}p^{e(F)-(k-2)(v(F)-\gax(F))}
  \end{split}
\end{equation}
and, dividing \eqref{f1} by $\bigpar{np^{k-1}}^{\gax(H-R)}$,
\begin{equation}\label{f2}
  \begin{split}
\frac{\Psi_H^R}{(n^2p^k)^{\gax(H-R)}}
&\ge
\bigpar{np^{k-1}}^{v(F)-2\gax(F)}p^{e(F)-(k-2)(v(F)-\gax(F))}.
  \end{split}
\end{equation}
Since $\frac12v(F)\le\gax(F)\le v(F)$, we have
$v(F)-\gax(F)\ge0$ while $v(F)-2\gax(F)\le0$,
so 
$\bigpar{np^{k-1}}^{v(F)-\gax(F)}\ge1$ if $np^{k-1}\ge1$ and
$\bigpar{np^{k-1}}^{v(F)-2\gax(F)}\ge1$ if $np^{k-1}\le1$.
Further, by \cite[Lemma 6.1]{JOR}, since $F\subseteq G-R=K_{k-1}$, we have
$e(F)\le (k-2)(v(F)-\gax(F))$, and thus 
$p^{e(F)-(k-2)(v(F)-\gax(F))}\ge1$ for all
$p\in(0,1]$. Consequently, at least one of the right hand
sides of \eqref{f1} and \eqref{f2} is
$\ge1$, so 
\begin{equation*}
  \Psi_H^R\ge\min\lrcpar{(np)^{\gax(H-R)},(n^2p^k)^{\gax(H-R)}},
\end{equation*}
or
$(\Psi_H^R)^{1/\gax(H-R)}\ge\min\lrcpar{np,n^2p^k}$.
Finally, by \eqref{MRG} and \eqref{ocab},
\begin{equation*}
 M= \min\bigcpar{ np,n^2p^k}=
 \begin{cases}
   n^2p^k, & p\le n^{-1/(k-1)},
\\
   np, & p\ge n^{-1/(k-1)}.
 \end{cases}
 \end{equation*}
\end{example}

\begin{example} {\bf Bipartite graphs rooted at one whole side.}
 These are exactly the graphs with $e(G-R)=0$, and so $p_1=p_2$ 
(see comment \ref{t3c} after Theorem \ref{T3}). 
The two classes of the bipartition are $R$ and $S=V(G)\setminus R$. Since
the only connected subgraph 
of $G-R$ is $K_1$, and $\gax(K_1)=1$, we have from
\eqref{MRG} and the comments above that
 $M=np^{\Delta_S(G)},$ where $\Delta_S(G)\=\max_{v\in S}d_G(v)$ is the maximum
  degree in $G$ among all the vertices of $S$.
 Consequently, the upper bound in part (b) of Theorem \ref{T3} is of the form
$$\P(X^R_G\ge t\mu)\le\exp\{-\Theta(np^{\Delta_S(G)})\}.$$
\end{example}

It follows from the above example that the bounds on $\P(X^R_G\ge t\mu)$ for $K_{s,2}$ with $r=2$
and for even cycles $C_{2s}$ with $r=s$ are the same, since in both cases $\Delta_S(G)=2$ . This is
a special case of a more general phenomenon that the bounds depend only on the structure of $G-R$
and the degree sequence $|N_G(v)\cap R|$, $v\in V(G)\setminus R$. Our
next example provides one 
more instance of that.

\begin{example}
{\bf Paths rooted at the endpoints and cycles rooted at a vertex.} Let $G=P_k$ be a path with $k$
vertices, $k\ge3$,
and let $R$ be the set of its two endpoints. Then
$m_R(G)=\frac{k-1}{k-2}$, and so $p\ge 
n^{-1/m_R(G)}$ implies that $np\to\infty$ as $n\to \infty$. The
minimum in $M$ can be achieved only 
on a subpath $H$ on at most $k-2$  vertices containing one root, or $H=P_k$. So,
$$M=\min\left\{\min_{1\le l\le k-3}\left(n^lp^l\right)^{1/\lceil l/2\rceil},
\left(n^{k-2}p^{k-1}\right)^{1/\lceil (k-2)/2\rceil}\right\}.$$ The terms with even $l$ are all
equal to $(np)^2$ while for odd $l$  they are equal to $(np)^{2l/(l+1)}$, which means that the
smallest among them is $np$, the term corresponding to a single rooted
edge. Hence, for even $k$, 
$M=np$ if $p\ge n^{-(k-2)/k}$, and otherwise $M=n^2p^{2(k-1)/(k-2)}$, the term corresponding to
$H=G$. A similar cutoff for odd $k$ occurs at $n^{-(k-3)/(k-1)}$ with $M$ taking the values of
$n^{2(k-2)/(k-1)}p^2$ and $np$, in turn.

Finally, note that if $R'$ is a single vertex in a cycle $C_{k-1}$, $k\ge 4$, then
$m_{R'}(C_{k-1})=m_R(P_{k})$, $\Psi^{R'}_{C_{k-1}}=\Psi^R_{P_{k}}$,
$\gax(C_{k-1}-R')=\gax(P_{k}-R)$,
and the same is true for all other
candidates for the minimum in $M$, that is, paths with a root at one end. Thus,
$M_{R',C_{k-1}}=M_{R,P_k}$ and the upper tail bounds provided by Theorem \ref{T3} are the same for
these two rooted graphs.
\end{example}

\begin{remark}
  In the unrooted case,  the lower tails are typically much smaller than the upper tails (see
  Remark 8.3 in \cite{JOR}), and at best
  they can be of the same order of magnitude, e.g., when $p$ is fixed.
  Here, we encounter an opposite situation. Namely, for every $(R,G)$ with $e_R(G)>0$ and a
  fixed $p$, by the FKG inequality, we have for any $t>1$
  $$\P(X^R_G\le t\mu)\ge \P(X^R_G=0)\ge\P(e_R(\gnp)=0)=\exp\{-\Theta(n)\}$$
  while for $t>1$ and $p_1<p\le p_2$, by Corollary \ref{C3},
  $$\P(X_G^R\ge t\mu)=\exp\set{-\Theta(n^ 2)}.$$
\end{remark}

\begin{remark}
If there are no
 isolated vertices in $H-R$ and $n\ge v(H)$, $m\ge e(H)$, then
\eqref{NRN1} may be improved to
\begin{equation}\label{NRN2}
N^R(n,m,H)=\Theta(N(n,m,H-R))=\Theta(N(\min(n,2m),m,H-R)).
\end{equation}
Note, however, that this fails if $H$ contains a vertex whose all neighbors are among the roots;
for example if $H$ is a rooted edge and $n> m$, then $N^R(n,m,H)=m$ and $N(n,m,H-R)=n$.

For the lower bound in \eqref{NRN2},
take a graph $F_0$ (with $V(F_0)\cap R=\emptyset$)
which achieves the maximum in $N(n-r,m/{3r},H-R)$; we may assume that
$F_0$ has no isolated vertices, and thus at most $2m/(3r)$ vertices.
Then join all vertices of $R$ to all  vertices of $F_0$, obtaining a
graph $F_1$ which contains $R$, has at most $n$ vertices, at most
$m/(3r)+r2m/(3r)\le m$ edges, and is such that $N^R(F_1,H)\ge
N(F_0,H-R)$.
Hence, $N^R(n,m,H)\ge N(n-r,m/(3r),H-R)$.
Finally, provided $m\ge 3re(H-R)$, we use the fact proved in \cite{JOR} that
if $n'=\Theta(n)$, $m'=\Theta(m)$ and $n,n'\ge v(H)$,
$m,m'\ge e(H)$, then
$N(n',m',H)=\Theta(N(n,m,H))$
(this follows directly from  \cite[Theorem 1.3]{JOR}).
The case $e(H)\le m< 3re(H-R)$ is trivial, since then both sides of
\eqref{NRN2}  are $\Theta(1)$.
\end{remark}

\newcommand\AAP{\emph{Adv. Appl. Probab.} }
\newcommand\JAP{\emph{J. Appl. Probab.} }
\newcommand\JAMS{\emph{J. \AMS} }
\newcommand\MAMS{\emph{Memoirs \AMS} }
\newcommand\PAMS{\emph{Proc. \AMS} }
\newcommand\TAMS{\emph{Trans. \AMS} }
\newcommand\AnnMS{\emph{Ann. Math. Statist.} }
\newcommand\AnnPr{\emph{Ann. Probab.} }
\newcommand\CPC{\emph{Combin. Probab. Comput.} }
\newcommand\JMAA{\emph{J. Math. Anal. Appl.} }
\newcommand\RSA{\emph{Random Struct. Alg.} }
\newcommand\ZW{\emph{Z. Wahrsch. Verw. Gebiete} }
\newcommand\DMTCS{\jour{Discr. Math. Theor. Comput. Sci.} }

\newcommand\AMS{Amer. Math. Soc.}
\newcommand\Springer{Springer-Verlag}
\newcommand\Wiley{Wiley}

\renewcommand\vol{\textbf}
\newcommand\jour{\emph}
\newcommand\book{\emph}
\newcommand\inbook{\emph}
\def\no#1#2,{\unskip#2, no. #1,} 
\newcommand\toappear{\unskip, to appear}

\newcommand\webcite[1]{\hfil
   \penalty0\texttt{\def~{{\tiny$\sim$}}#1}\hfill\hfill}
\newcommand\webcitesvante{\webcite{http://www.math.uu.se/~svante/papers/}}
\newcommand\arxiv[1]{\webcite{http://arxiv.org/#1}}

\def\nobibitem#1\par{}


\begin{thebibliography}{99}

\bibitem{dujo} A. Dudek, J. Polcyn, and A. Ruci\'nski, Subhypergraph
 counts in extremal and random hypergraphs and 
 the fractional $q$-independence,  {\it
J.  Combin. Optim.} DOI~10.1007/s10878-008-9174-9, Published online: 17 July 2008.

\bibitem{FK} E. Friedgut and J. Kahn, On
the number of copies of one hypergraph in another, {\it Israel J. Math.} {\bf 105}
(1998), 251--256.

\bibitem{GRR} R. Graham, V. R\"odl, and  A. Ruci\'nski,  On Schur properties of random subsets of
integers, {\it J. Number Th.} {\bf 61}(2) (1996), 388--408.

\bibitem{JOR} S. Janson, K. Oleszkiewicz, and  A. Ruci\'nski,   Upper tails for subgraph counts in random graphs,
{\it Israel J. Math.} {\bf 142} (2004), 61--92.

\bibitem{inf} S. Janson and A. Ruci\'nski,
 The infamous upper tail, \RSA {\bf 20}(3) (2002),  317--342.



\bibitem{del} S. Janson and A. Ruci\'nski,  The deletion method for upper tail estimates,
{\it Combinatorica} {\bf 24}(4) (2004), 615--640


\bibitem{JLR} S.~Janson, T.~{\L}uczak, and A.~Ruci{\' n}ski, {\it Random Graphs},
John Wiley and Sons, New York (2000).


\bibitem{Cops} T. {\L}uczak and  P. Pra{\l}at,
Chasing robbers on random graphs: zigzag theorem, preprint (2008).

\bibitem{RR} V. R\"odl and A. Ruci\'nski, 
Rado partition theorem for random subsets of integers,
{\it Proc.  London Math. Soc.} 
{\bf 74}(3) (1997), 481--502.

\bibitem{vu} V.H. Vu,
A large deviation result on the number of small subgraphs of a random graph. \CPC {\bf 10}(1)
(2001), 79--94.

\end{thebibliography}
\end{document}